\documentclass[preprint,11pt]{elsarticle}

\usepackage{amsfonts, amsmath, amscd}
\usepackage[psamsfonts]{amssymb}

\usepackage{amssymb}

\usepackage{pb-diagram}

\usepackage[all,cmtip]{xy}

\usepackage{amsthm}

\usepackage[usenames]{color}

\newtheorem{theorem}{Theorem}[section]

\newtheorem{corollary}[theorem]{Corollary}

\newtheorem{proposition}[theorem]{Proposition}

\newtheorem{problem}[theorem]{Problem}

\theoremstyle{definition}
\newtheorem{example}[theorem]{Example}

\theoremstyle{remark}

\numberwithin{equation}{section}
\numberwithin{theorem}{section}

\newcommand{\w}{\omega}

\newcommand{\ZZ}{\mathbb{Z}}

\newcommand{\IR}{\mathbb{R}}

\newcommand{\AAA}{\mathcal{A}}

\newcommand{\Ra}{\Rightarrow}

\newcommand{\SM}{{\setminus}}

\begin{document}

\begin{frontmatter}

\title{On $\kappa$-Fr\'{e}chet--Urysohn topological groups}

\author{Saak Gabriyelyan}
\ead{saak@math.bgu.ac.il}
\address{Department of Mathematics, Ben-Gurion University of the Negev, Beer-Sheva, Israel}

\author{Alexander V. Osipov}
\ead{OAB@list.ru}
\address{Krasovskii Institute of Mathematics and Mechanics, Ural Federal University, Yekaterinburg, Russia}

\author{Evgenii Reznichenko}
\ead{erezn@inbox.ru}
\address{Department of Mathematics, Lomonosov Mosow State University, Moscow, Russia}

\begin{abstract}
We characterize $\kappa$-Fr\'{e}chet--Urysohn topological groups. Using this characterization we show that: (1) a hemicompact topological group is $\kappa$-Fr\'{e}chet--Urysohn iff it is locally compact, and (2) if $F$ is a closed metrizable subspace of a topological vector space (tvs) $E$ such that the quotient $E/F$ is a $\kappa$-Fr\'{e}chet--Urysohn space, then also $E$ is a $\kappa$-Fr\'{e}chet--Urysohn space. Consequently, the product of a $\kappa$-Fr\'{e}chet--Urysohn tvs and a metrizable tvs is a $\kappa$-Fr\'{e}chet--Urysohn space. Under Martin's Axiom, we construct a countable Boolean $\kappa$-Fr\'{e}chet--Urysohn group which is not a $k_\IR$-space.
\end{abstract}

\begin{keyword}
topological group \sep topological vector space \sep $\kappa$-Fr\'{e}chet--Urysohn  \sep $k_\IR$-space

\MSC[2020] 22A05 \sep 54A05  \sep   54C35 \sep 54D99

\end{keyword}

\end{frontmatter}


\section{Introduction}


The class of $\kappa$-Fr\'{e}chet--Urysohn spaces was introduced by Arhangel'skii. A Tychonoff space $X$ is called {\em $\kappa$-Fr\'{e}chet--Urysohn} if, for any open set $U\subseteq X$ and each point $x\in \overline{U}$, there exists a sequence $\{x_n\}_{n\in \w}\subseteq U$ that converges to~$x$. It should be mentioned that without using the term $\kappa$-Fr\'{e}chet--Urysohn, Mr\'{o}wka proved in \cite{Mrowka} that any product of first countable spaces is  $\kappa$-Fr\'{e}chet--Urysohn. $\kappa$-Fr\'{e}chet--Urysohn spaces are thoroughly studied by Liu and Ludwig \cite{LiL}, Sakai \cite{Sak2,Sakai}, Gabriyelyan \cite{Gabr-B1,Gabr-seq-Ascoli}, Gabriyelyan and Reznichenko \cite{GR-compact}, and Gabriyelyan, Osipov and Reznichenko \cite{GOR-Baire}.

In this article we continue the study of topological groups (in particular, topological vector spaces) which are $\kappa$-Fr\'{e}chet--Urysohn spaces. In Theorem \ref{t:kFU-group-charac} we characterize $\kappa$-Fr\'{e}chet--Urysohn topological groups. Using this characterization we show in Proposition \ref{p:kFU-hemicompact} that a hemicompact topological group $G$ is $\kappa$-Fr\'{e}chet--Urysohn if, and only if, $G$ is Baire if, and only if, $G$ is locally compact. It is known that the product of a Fr\'{e}chet--Urysohn space and a non-discrete metrizable space can be not $\kappa$-Fr\'{e}chet--Urysohn, see \cite{GR-compact}. However, in the realm of topological vector spaces the situation changes. Indeed, we show in Corollary \ref{c:prod-kFU-metr-tvs} that if $E$ is a $\kappa$-Fr\'{e}chet--Urysohn topological vector space and $F$ is a metrizable topological vector space, then the product $E\times F$ is a $\kappa$-Fr\'{e}chet--Urysohn space. Under Martin's Axiom, we construct a countable Boolean $\kappa$-Fr\'{e}chet--Urysohn group which is not a $k_\IR$-space, see Example \ref{exa:kFU-count-non-kR}.


\section{Main results} \label{sec:kFU-charac}



We start with the following characterization of topological groups which are $\kappa$-Fr\'{e}chet--Urysohn spaces, we shall call such groups by {\em  $\kappa$-Fr\'{e}chet--Urysohn groups}. The identity of a group $G$ is denoted by $e$.

Following  \cite{Osipov-23}, a family $\AAA=\{A_n\}_{n\in\w}$ of subsets of a space $X$ {\em weakly converges} to a point $x$ if for every neighborhood $W$ of $x$, there are a sequence $\{a_n\}_{n\in\w}$ and $m\in\w$ such that $a_n\in A_n$ $(n\in\w)$ and $a_n\in W$ for every $n>m$.


\begin{theorem} \label{t:kFU-group-charac}
For a topological group $G$, the following assertions are equivalent:
\begin{enumerate}
\item[{\rm(i)}] for every sequence $\{\mathcal{V}_n: n\in \w\}$ of countable families $\mathcal{V}_n=\{V^i_n: i\in \w\}$ of open subsets of $G$ such that $\mathcal{V}_n$ weakly converges to $e$ for every $n\in\w$, there are strictly increasing sequences  $(i_k)$ and $(n_k)$ in $\w$ and a sequence $\{x_k\}_{k\in\w}$ such that $x_k\to e$ and $x_k\in V^{i_k}_{n_k}$ for every $k\in\w$;
\item[{\rm(ii)}] for every sequence $\{U_n\}_{n\in\w}$ of open subsets of $G$ such that $e\in \overline{U_n}$ for every $n\in\w$, there are a strictly increasing sequence $(n_k)\subseteq \w$ and a sequence $\{x_k\}_{k\in\w}$ in $G$ such that $x_k\in U_{n_k}$ $(k\in\w)$ and $x_k\to e$;
\item[{\rm(iii)}] $G$ is a $\kappa$-Fr\'{e}chet--Urysohn space.
\end{enumerate}
\end{theorem}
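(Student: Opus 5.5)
The plan is to prove the cycle (iii)$\Ra$(ii)$\Ra$(i)$\Ra$(iii), where (iii)$\Ra$(ii) carries the real content.

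\emph{(i)$\Ra$(iii).} Let $U$ be open and $x\in\overline{U}$. The set $x^{-1}U$ is open and $e\in\overline{x^{-1}U}$. Put $V^i_n:=x^{-1}U$ for all $i,n$. Each family $\mathcal{V}_n$ weakly converges to $e$: given a neighbourhood $W$ of $e$, choose one point $a\in W\cap x^{-1}U$ and set $a_i:=a$ for every $i$. Then (i) yields $x_k\to e$ with $x_k\in x^{-1}U$, so $xx_k\in U$ and $xx_k\to x$.

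\emph{(ii)$\Ra$(i).} Given families $\mathcal{V}_n$ weakly converging to $e$, set $U_n:=\bigcup_{i\ge n}V^i_n$. This set is open, and $e\in\overline{U_n}$: for a neighbourhood $W$, the sequence $(a_i)$ from weak convergence has $a_i\in V^i_n\cap W$ for all large $i$, hence for some $i\ge n$. Applying (ii) gives $x_k\in U_{n_k}$ with $x_k\to e$. Choose $i_k\ge n_k$ with $x_k\in V^{i_k}_{n_k}$. Since $n_k\to\infty$, also $i_k\to\infty$, so passing to a subsequence makes $(i_k)$ strictly increasing while $(n_k)$ stays strictly increasing.

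\emph{(iii)$\Ra$(ii).} This is the main obstacle: it is a diagonal ``$\alpha_4$-type'' property, and it must be extracted from $\kappa$-Fr\'echet--Urysohnness using the group structure. First apply (iii) to each $U_n$ separately to get sequences $s^n_j\to e$ ($j\in\w$) with $s^n_j\in U_n$. We may assume that only finitely many $U_n$ contain a sequence converging to $e$ that lies in a single $U_n$ but avoids the others; otherwise we are done trivially.

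The goal is then to manufacture a \emph{single} open set $O$ with $e\in\overline{O}$ such that every sequence in $O$ converging to $e$ is forced to meet $U_n$ for infinitely many indices $n$. For this I would imitate Nyikos' product trick for Fr\'echet groups. Choose open neighbourhoods $W_{n,j}$ of $s^n_j$ inside $U_n$, shrinking as $n,j$ grow. Then consider an open set built from translated products, such as
\[
O=\bigcup_{n<m}\ \bigcup_{j} W_{n,j}\,(s^m_j)^{-1}\cdots,
\]
arranged so that $e\in\overline{O}$, because $s^n_j(s^m_j)^{-1}\to e$ along suitable diagonals. The shrinking should be chosen so that a sequence in $O$ converging to $e$ which stayed within finitely many $n$ would produce, after multiplying back, a convergent sequence contradicting the choice of the $W_{n,j}$.

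Applying (iii) to $O$ gives a sequence $z_k\to e$ in $O$. Decoding each $z_k$ via the group operations then yields points $x_k\in U_{n_k}$ with $n_k$ unbounded and $x_k\to e$, and a subsequence makes $(n_k)$ strictly increasing. The delicate point, where I expect the real work, is choosing the neighbourhoods $W_{n,j}$ so that $e\in\overline{O}$ holds while escaping to infinitely many $n$ is genuinely forced. If the product trick resists, a fallback is to use the closure condition more cleverly via the ``weak convergence'' formulation, building $O$ as a union of pieces $U_n\cap g_nW$ indexed so that each piece individually stays away from a fixed neighbourhood of $e$.
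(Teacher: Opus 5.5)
Your (i)$\Rightarrow$(iii) and (ii)$\Rightarrow$(i) are correct. Routing (ii)$\Rightarrow$(i) through $U_n:=\bigcup_{i\ge n}V^i_n$ is a clean way to dispose of the index $i$; the paper instead proves (iii)$\Rightarrow$(i) directly and builds the restriction $n\le i$ into its open set. But the only implication with real content, (iii)$\Rightarrow$(ii), is not proved. Your set $O$ is never defined (the display trails off in $\cdots$), the neighbourhoods $W_{n,j}$ are unspecified, and neither $e\in\overline{O}$ nor the claim that sequences in $O$ converging to $e$ must reach infinitely many indices is checked. The product trick as sketched also cannot work. Decoding $z_k=w_k(s^{m_k}_{j_k})^{-1}\to e$ back to $w_k\in U_{n_k}$ with $w_k\to e$ needs $s^{m_k}_{j_k}\to e$, a diagonal convergence across \emph{different} null sequences, which is exactly what is unavailable. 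Your preliminary reduction is unfounded as well. By (iii) every $U_n$ already contains a sequence converging to $e$, and no property of these individual sequences (such as avoiding the other $U_m$) yields a diagonal sequence. In the Fr\'echet--Urysohn fan, with $U_n$ the $n$-th spine minus the vertex, all such sequences exist but (ii) fails. The fallback pieces $U_n\cap g_nW$ do not work either: nothing forces $U_n$ to meet $g_nW$ near $e$, so $e$ need not lie in the closure of their union.

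The missing idea is to translate each $U_n$ itself by the $n$-th term of a \emph{single} nontrivial null sequence, cut down so that each piece stays away from $e$. If $G$ is discrete, (ii) is trivial. Otherwise (iii) applied to $G\setminus\{e\}$ gives $p_n\to e$ with $p_n\neq e$. Choose open neighbourhoods $W_n$ of $e$ with $e\notin\overline{p_nW_n}$ and set $O:=\bigcup_n p_n(U_n\cap W_n)$. Then $e\in\overline{O}$: for a neighbourhood $U$ of $e$, pick $U_1$ with $U_1U_1\subseteq U$, then $n$ with $p_n\in U_1$, then $s\in U_n\cap W_n\cap U_1$, so $p_ns\in O\cap U$. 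By (iii) there are $z_k\to e$ with $z_k\in p_{n_k}(U_{n_k}\cap W_{n_k})$. Since every finite union $\bigcup_{n\le N}\overline{p_nW_n}$ misses $e$, the $n_k$ are unbounded, so after passing to a subsequence they strictly increase. Finally $x_k:=p_{n_k}^{-1}z_k\in U_{n_k}$ satisfies $x_k\to e$ because $p_{n_k}\to e$. This is the paper's argument, which runs it on the sets $p_nV^i_n\cap p_nW_n$ ($n\le i$) to reach (i) directly.
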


\begin{proof}
(i)$\Ra$(ii) For every $i\in \w$, we set $V^i_n:=U_n$. Now (ii) immediately follows from (i).
\smallskip

(ii)$\Ra$(iii) Let $U$ be an open subset of $G$ such that $e\in \overline{U}\SM U$. For every $n\in\w$, set $U_n:=U$. Then (iii) immediately follows from (ii).
\smallskip

(iii)$\Ra$(i) Let $\{\mathcal{V}_n: n\in \w\}$  be a family satisfying (i). Choose an arbitrary one-to-one sequence $\{p_n\}_{n\in\w}$ in $G\SM \{e\}$  converging to $e$ (such a sequence exists by the $\kappa$-Fr\'{e}chet--Urysohness applying to the open set $G\SM\{e\}$). For every $n\in\w$, set $p_n\cdot\mathcal{V}_n:=\{p_n V^i_n: i\in \w\}$. Choose a sequence $\{W_n: n\in \w\}$ of open neighborhoods of $e$ such that
\begin{equation} \label{equ:kFU-group-charac-1}
e\not\in \overline{p_n W_n}
\end{equation}
for every $n\in \w$. For each $0\leq n\leq i$, set
\begin{equation} \label{equ:kFU-group-charac-2}
\mathcal{O}^i_n=p_nV^i_n\cap p_nW_n.
\end{equation}
We claim that $e\in \overline{\bigcup \{\mathcal{O}^i_n: 0\leq n\leq i\}}$.  Indeed, let $U$ be a neighborhood of $e$. Choose a neighborhood $U_1$ of $e$ such that $U_1\cdot U_1\subseteq U$. Since $p_n\to e$, there is $m\in\w$ such that $p_n\in U_1$ for each $n\geq m$.  As $\mathcal{V}_m$ weakly converges to $e$, there are $j\geq m $ and a point $s_m^j\in V_m^j$ such that $s_m^j\in U_1\cap W_m$. Then $p_m\cdot s_m^j\in U_1\cdot U_1\subseteq U$ and $p_m\cdot s_m^j\in \mathcal{O}^j_m$.
\smallskip

Since $G$ is $\kappa$-Fr\'{e}chet--Urysohn, there is a sequence $(z_k)\subseteq \bigcup \{\mathcal{O}^i_n: 0\leq n\leq i\}$ converging to $e$. For every $k\in\w$, choose indices  $i_k$ and $n_k$ such that $z_k\in \mathcal{O}^{i_k}_{n_k}$. The sequence $(n_k)$ cannot be bounded because of (\ref{equ:kFU-group-charac-1}) and the inclusion $\mathcal{O}^{i_k}_{n_k} \subseteq p_{n_k}W_{n_k}$. Taking into account that $j_k\geq n_k$ it follows that also $(j_k)$ is unbounded. Passing to subsequences if needed, we can assume that $(j_k)$ and $(n_k)$ are strictly increasing.  For every $k\in\w$, set $x_k:= p_{n_k}^{-1}\cdot z_k $. Then, by (\ref{equ:kFU-group-charac-2}), $x_k\in V^{i_k}_{n_k}$ and, clearly, $x_k\to e$. Thus (i) is satisfied.
\end{proof}


As an application of Theorem \ref{t:kFU-group-charac} we obtain a structure of hemicompact groups with the $\kappa$-Fr\'{e}chet--Urysohn property.

\begin{proposition} \label{p:kFU-hemicompact}
For a hemicompact topological group  $G$ the following assertions are equivalent:
\begin{enumerate}
\item[{\rm(i)}] $G$ is $\kappa$-Fr\'{e}chet--Urysohn;
\item[{\rm(ii)}]  $G$ is locally compact;
\item[{\rm(iii)}] $G$ is Baire.
\end{enumerate}
\end{proposition}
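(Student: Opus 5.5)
We prove (ii)$\Ra$(i), (ii)$\Ra$(iii), (iii)$\Ra$(ii) and (i)$\Ra$(ii); the last is the real content. Let $\{K_n\}_{n\in\w}$ be an increasing sequence of compact sets witnessing hemicompactness, so every compact subset of $G$ lies in some $K_n$ and $G=\bigcup_n K_n$.

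For (ii)$\Ra$(i), pick a compact neighborhood $C$ of $e$. Since $G$ is hemicompact, it is $\sigma$-compact. A $\sigma$-compact locally compact group has a compact normal subgroup $N$ such that $G/N$ is metrizable (Kakutani--Kodaira). The quotient map $\pi\colon G\to G/N$ is then open and perfect. I would argue directly with Theorem \ref{t:kFU-group-charac}(ii) as follows. Take open sets $U_n$ with $e\in\overline{U_n}$. Since $G/N$ is metrizable, there are points $y_n\in\pi(U_n)$ with $y_n\to\pi(e)$. Choose $x_n\in U_n$ with $\pi(x_n)=y_n$. By perfectness the $x_n$ lie in the compact set $\pi^{-1}(\{y_n\}\cup\{\pi(e)\})$, but this only gives cluster points in $N$, not at $e$ itself.

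A cleaner route is the standard fact that a locally compact group is $\kappa$-Fr\'echet--Urysohn because it is homeomorphic to a product of a metrizable space and a compact group. Products of first countable spaces are $\kappa$-FU by Mr\'owka, compact groups are dyadic, and dyadic compacta are $\kappa$-FU. So I would simply cite that locally compact groups (more generally, locally dyadic spaces) are $\kappa$-Fr\'echet--Urysohn.

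(ii)$\Ra$(iii) is the Baire category theorem for locally compact spaces. For (iii)$\Ra$(ii), the group is $\sigma$-compact, so by the Baire property some closed $K_n$ has nonempty interior. Translating, $e$ has a compact neighborhood, hence $G$ is locally compact.

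(i)$\Ra$(ii) is the main step. Assume $G$ is $\kappa$-FU but not locally compact. Then no $K_n$ is a neighborhood of $e$, so for each $n$ and each neighborhood $W$ of $e$ we have $W\not\subseteq K_n$. Set $U_n:=G\SM K_n$. This set is open, and $e\in\overline{U_n}$: otherwise some open neighborhood of $e$ would lie inside $K_n$. By Theorem \ref{t:kFU-group-charac}(ii) there are $n_k\uparrow\infty$ and $x_k\in G\SM K_{n_k}$ with $x_k\to e$. The set $S=\{x_k\}_k\cup\{e\}$ is compact, so $S\subseteq K_m$ for some $m$. But for $k$ with $n_k\ge m$ we get $x_k\notin K_{n_k}\supseteq K_m$, a contradiction. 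Hence $G$ is locally compact.

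The obstacle is only (ii)$\Ra$(i), which rests on the structure theory of locally compact groups (dyadicity of compact groups, or Kakutani--Kodaira) rather than on anything in this paper. I would cite it from the literature.
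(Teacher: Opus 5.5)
Your proof is correct and follows the paper's argument: (i)$\Ra$(ii) uses the same sets $U_n=G\SM K_n$, Theorem~\ref{t:kFU-group-charac}(ii) and the compactness of $\{x_k\}_{k\in\w}\cup\{e\}$; (iii)$\Ra$(ii) is the same Baire argument on the $K_n$; and (ii)$\Ra$(i) is also settled by citation (the paper cites Corollary~6.6 of \cite{GR-compact}). If you keep your sketch of (ii)$\Ra$(i) rather than just citing, drop the abandoned Kakutani--Kodaira attempt, and state the missing step that $\kappa$-Fr\'echet--Urysohnness is preserved by closed continuous surjections $f$ (because $\overline{U}\subseteq f\big(\overline{f^{-1}(U)}\big)$), applied to $\mathrm{id}\times f\colon M\times\mathbf{2}^{\kappa}\to M\times K$; the two facts you list (Mr\'owka's theorem and dyadic compacta being $\kappa$-Fr\'echet--Urysohn) do not by themselves cover the product $M\times K$.
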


\begin{proof}
(i)$\Ra$(ii) Let $G$ be $\kappa$-Fr\'{e}chet--Urysohn, and suppose for a contradiction that $G$ is not locally compact. Take an increasing sequence $\{K_n\}_{n\in\w}$ of compact subsets of $G$ such that any compact subset of $G$ is contained in some $K_n$. We assume that $e\in K_0$. By our supposition all $K_n$ are not neighborhoods of $e$. For every $n\in\w$, set $U_n:=G\SM K_n$. Then $U_n$ is an open subset of $G$ such that $e\in \overline{U_n}\SM U_n$. Since  $G$ is $\kappa$-Fr\'{e}chet--Urysohn, (ii) of Theorem \ref{t:kFU-group-charac} implies that there are a strictly increasing sequence $(n_k)\subseteq \w$ and a sequence $\{x_k\}_{k\in\w}$ in $G$ such that $x_k\in U_{n_k}$ $(k\in\w)$ and $x_k\to e$. Since $K:=\{x_k\}_{k\in\w}\cup\{e\}$ is a compact subset of $G$, there is $m\in \w$ such that $K\subseteq K_m$. Take $k\in\w$ such that $n_k>m$. Then $x_k\in K_m\cap  U_{n_k}=K_m\cap (G\SM  K_{n_k})=\emptyset$. This is a contradiction.

(ii)$\Ra$(i) If $G$ is locally compact, then $G$ is $\kappa$-Fr\'{e}chet--Urysohn by Corollary 6.6 
of \cite{GR-compact}.

(ii)$\Ra$(iii) is trivial.

(iii)$\Ra$(ii) If $G$ is Baire, then (see (i)), there is $n\in\w$ such that the compact set $K_n$ contains an open subset. Thus $G$ is locally compact.
\end{proof}
The condition of being $\kappa$-Fr\'{e}chet--Urysohn in Proposition \ref{p:kFU-hemicompact} cannot be replaced by the property of being a sequential space. Indeed, it is well-known that the free abelian group $A([0,\w])$ over the convergent sequence $[0,\w]$ is a sequential hemicompact group which is not locally compact.

Example 14.3 of \cite{kak} states that an abelian hemicompact topological group $G$ is Fr\'{e}chet--Urysohn if, and only if, it is a locally compact Polish group. Recall that a Tychonoff space $X$ is {\em Fr\'{e}chet--Urysohn} if, for any $A\subseteq X$ and each $x\in \overline{A}$, there exists a sequence $\{a_n: n\in \w\}\subseteq A$ that converges to $x$. Below we show that the condition of being abelian can be omitted.

\begin{corollary} \label{c:FU-hemicompact}
Let $G$ be a hemicompact topological group. Then $G$ is Fr\'{e}chet--Urysohn if, and only if, it is a locally compact Polish group.
\end{corollary}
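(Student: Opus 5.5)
For the forward direction I would first reduce to local compactness using Proposition \ref{p:kFU-hemicompact}. Every Fr\'{e}chet--Urysohn space is $\kappa$-Fr\'{e}chet--Urysohn, since the defining condition for arbitrary sets $A$ applies in particular to open sets $U$. So if $G$ is hemicompact and Fr\'{e}chet--Urysohn, Proposition \ref{p:kFU-hemicompact} shows that $G$ is locally compact. Next, a locally compact hemicompact space is $\sigma$-compact, and a locally compact group is paracompact and hence Tychonoff. It remains to prove that $G$ is metrizable and separable.

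For metrizability I would use the Birkhoff--Kakutani theorem: a topological group is metrizable iff it is first countable, iff $e$ has a countable base. I would combine two facts about the locally compact group $G$.
\begin{itemize}
\item[(a)] $G$ contains a compact neighborhood $K$ of $e$.
\item[(b)] Every compact Fr\'{e}chet--Urysohn group is metrizable.
\end{itemize}
For (b), take a compact subgroup $H$. A compact group that is not metrizable contains a copy of $\{0,1\}^{\omega_1}$ (\v{S}apirovski\u{\i}'s dyadicity for compact groups), or, more elementarily, contains a closed copy of a non-Fr\'{e}chet space such as the $\Sigma$-product closure. Alternatively one can use the Kakutani--Kodaira theorem: $G$ has a compact normal subgroup $N\subseteq K$ with $G/N$ metrizable (and separable, since $G$ is $\sigma$-compact). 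Then $N$ is a compact group that is Fr\'{e}chet--Urysohn as a subspace. Any compact group is dyadic, and a dyadic Fr\'{e}chet--Urysohn compact space is metrizable (Arhangel'skii/Engelking: dyadic compacta of countable tightness are metrizable). Hence $N$ is metrizable. Since $G$ is locally compact and $N$ is a compact metrizable normal subgroup with $G/N$ metrizable, $G$ is metrizable, for instance because the character of $G$ is bounded by $\chi(N)\cdot\chi(G/N)$ for closed subgroups and quotients of topological groups. A simpler route: $K$ itself is a compact Fr\'{e}chet--Urysohn space, and the quotient map $G\to G/N$ restricted to $K$ is perfect with metrizable fibers $xN\cap K$, which also yields first countability at $e$.

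Once $G$ is metrizable, separability follows immediately. $G$ is $\sigma$-compact, each compact metrizable space is separable, so $G$ is separable. Moreover a locally compact metrizable space is completely metrizable, since it is open in its completion (\v{C}ech-complete). So $G$ is Polish.

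For the converse, a locally compact Polish group is metrizable and therefore Fr\'{e}chet--Urysohn. It is also hemicompact, because it is locally compact and $\sigma$-compact (it is Lindel\"{o}f being separable metrizable). The main obstacle is step (b), the metrizability of compact Fr\'{e}chet--Urysohn groups. I would cite the classical result that compact groups are dyadic together with the theorem that a dyadic compactum of countable tightness is metrizable.
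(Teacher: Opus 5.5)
Your proof is correct, but at the key step it follows a different route from the paper. Both arguments get local compactness from Proposition~\ref{p:kFU-hemicompact}. Both then finish with the Birkhoff--Kakutani theorem and the separability of $\sigma$-compact metrizable spaces.

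For countable character, however, the paper simply notes that a Fr\'{e}chet--Urysohn group has countable tightness. It then cites Ismail's theorem that $\chi(G)=t(G)$ for every locally compact group.

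You instead reprove the countable case of that result, in three steps:
\begin{enumerate}
\item Kakutani--Kodaira, which is where you need $\sigma$-compactness, gives a compact normal subgroup $N$ with $G/N$ metrizable.
\item $N$ is dyadic and has countable tightness, hence is metrizable. The dyadicity of compact groups is the Ivanovski\u{\i}--Kuz'minov theorem, not \v{S}apirovski\u{\i}'s.
\item $\chi(G)\le\chi(N)\cdot\chi(G/N)$.
\end{enumerate}

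You only assert the inequality in step 3, but it does hold for any subgroup $N$. Given a neighbourhood $W$ of $e$, pick a symmetric $V$ with $V^2\subseteq W$. Take symmetric $U_n$ with $U_{n+1}^2\subseteq U_n$ and $U_n\cap N\subseteq V$, and take $O_m$ with $O_m\subseteq (V\cap U_{n+1})N$. Write any $x\in U_{n+1}\cap O_m$ as $x=vh$ with $v\in V\cap U_{n+1}$ and $h\in N$. Then $h=v^{-1}x\in U_n\cap N\subseteq V$, so $U_{n+1}\cap O_m\subseteq V^2\subseteq W$.

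Your route makes the mechanism visible: it reduces to a compact group, where dyadicity plus countable tightness forces metrizability. The cost is that it replaces one citation by three heavier classical theorems. As written, it also depends on $\sigma$-compactness, which Ismail's theorem does not need; you could remove this by passing to the open subgroup generated by a compact neighbourhood of $e$. The paper's argument is correspondingly only a few lines long.
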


\begin{proof}
Assume that $G$ is Fr\'{e}chet--Urysohn. Then, by Proposition \ref{p:kFU-hemicompact}, $G$ is locally compact. The group $G$ being Fr\'{e}chet--Urysohn has countable tightness. Recall that, by \cite{Ismail}, the character of any locally compact group coincides with its tightness. Therefore $G$ has countable character, and hence, by the Kakutani metrization theorem, $G$ is metrizable. As $G$ is hemicompact and metrizable, it is separable. Thus $G$ is  a locally compact Polish group. The converse assertion is clear.
\end{proof}

Proposition 14.4 of \cite{kak} states that if $F$ is a closed metrizable subspace of a tvs $E$ such that $E/F$ is Fr\'{e}chet--Urysohn, then the space $E$ is a Fr\'{e}chet--Urysohn space, too. Below we show that an analogous result holds also for $\kappa$-Fr\'{e}chet--Urysohn spaces.

\begin{proposition} \label{p:kFU-metriz-quot}
If $F$ is a closed metrizable subspace of a topological vector space $E$ such that the quotient $E/F$ is a $\kappa$-Fr\'{e}chet--Urysohn space, then  $E$ is also a $\kappa$-Fr\'{e}chet--Urysohn space.
\end{proposition}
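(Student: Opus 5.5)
Since $E$ is homogeneous, it suffices to treat the point $0$. So let $U\subseteq E$ be open with $0\in\overline{U}\SM U$; we must find a sequence in $U$ converging to $0$. Let $q:E\to E/F$ be the quotient map, which is open, continuous and linear. Because $F$ is metrizable, we can fix a decreasing sequence $\{W_i\}_{i\in\w}$ of balanced open neighborhoods of $0$ in $E$ such that $W_{i+1}+W_{i+1}\subseteq W_i$ and $\{W_i\cap F\}_{i\in\w}$ is a base at $0$ in $F$. The plan is to feed suitable open sets of $E/F$ into condition (ii) (or, if needed, the family version (i)) of Theorem~\ref{t:kFU-group-charac}, applied to the $\kappa$-Fr\'{e}chet--Urysohn group $E/F$. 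We then lift the resulting sequence back to $U$, using the index $n_k\to\infty$ to control the $F$-component.

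\textbf{Step 1: the open sets.} For $n\in\w$ put $U_n:=q\big(U\cap W_n\big)$. Each $U_n$ is open in $E/F$, and $0\in\overline{U_n}$ because $0\in\overline{U\cap W_n}$ and $q$ is continuous. (If $0\in U_n$ for infinitely many $n$, one handles that case directly; see Step~3.) By Theorem~\ref{t:kFU-group-charac}(ii) there are a strictly increasing $(n_k)$ and points $y_k\in U_{n_k}$ with $y_k\to 0$ in $E/F$.

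\textbf{Step 2: correcting the lifts.} Choose $u_k\in U\cap W_{n_k}$ with $q(u_k)=y_k$. The sequence $(u_k)$ need not converge, since the $W_n$ do not form a base in $E$. The information we have is twofold. First, $u_k\in W_{n_k}$ with $n_k\to\infty$, which controls any $F$-part. Second, $q(u_k)\to 0$, so for every neighborhood $V$ of $0$ we eventually have $u_k\in F+V$. The aim is to show that $u_k\to 0$ already, or to replace $u_k$ by points $x_k\in U$ with $x_k\to 0$. Fix a balanced neighborhood $V$ of $0$ and pick $i$ with $W_i\subseteq V$. For large $k$ write $u_k=f_k+v_k$ with $f_k\in F$ and $v_k\in V\cap W_{i+1}$; this uses $y_k\to 0$ with the neighborhood $V\cap W_{i+1}$. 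Then $f_k=u_k-v_k\in W_{n_k}+W_{i+1}\subseteq W_i$ once $n_k\geq i+1$. Hence $f_k\in F\cap W_i$, and so $u_k\in W_i+V$, which is contained in $V+V$. Since $V$ was arbitrary, $u_k\to 0$. This is exactly the mechanism of Proposition~14.4 of \cite{kak}: the metrizable $F$ absorbs the ambiguity of lifting, and $n_k\to\infty$ forces the $F$-parts into ever smaller neighborhoods.

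\textbf{Step 3: main obstacle.} The delicate point is the decomposition in Step~2. We must choose the $F$-component and the small component \emph{simultaneously} for a single neighborhood $V$, while the $f_k$ are allowed to depend on $V$. So the argument must show convergence of $u_k$ itself, not of some $V$-dependent decomposition, and the displayed chain of inclusions does exactly that. I would double-check the case where $0\in U_n$, i.e.\ $U\cap W_n\cap F\ne\emptyset$. Then $\overline{U_n}\ni 0$ trivially, and $y_k=0$ is allowed; the lifts $u_k\in U\cap W_{n_k}\cap F$ then converge to $0$ in $F$ because $\{W_i\cap F\}$ is a base, and the same computation applies. Thus in all cases we obtain a sequence in $U$ converging to $0$, and $E$ is $\kappa$-Fr\'{e}chet--Urysohn.
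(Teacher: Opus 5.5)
Your proof is the paper's proof. It uses the same open sets $q(U\cap W_n)$ and the same application of Theorem~\ref{t:kFU-group-charac}(ii). It then uses the same decomposition argument to show that the lifts $u_k\in U\cap W_{n_k}$ themselves converge to $0$.

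There is one wrong step in Step~2. In general you cannot pick $i$ with $W_i\subseteq V$: as you note yourself, the $W_i$ need not form a base at $0$ in $E$. What you can and should pick is $i$ with $W_i\cap F\subseteq V$. Nothing else changes. You have already shown $f_k\in F\cap W_i$, so
$u_k=f_k+v_k\in (F\cap W_i)+V\subseteq V+V$.
This is exactly the paper's choice of $n_0$ with $(V_{n_0}+V_{n_0})\cap F\subseteq W$.

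The special case in Step~3 needs no separate treatment. Theorem~\ref{t:kFU-group-charac}(ii) only requires $0\in\overline{U_n}$, and the Step~2 computation works whether or not $y_k=0$.
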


\begin{proof}
Since $F$ is a metrizable subspace of $E$, there exists a decreasing sequence $\{V_n\}_{n\in\w}$ of balanced neighborhoods of zero in $E$ such that $V_{n+1}+V_{n+1}\subseteq V_n$ for all $n\in \w$ and the sequence $\{V_n\cap F\}_{n\in\w}$ is a basis of neighborhoods of zero in $F$. To prove that $E$ is
$\kappa$-Fr\'{e}chet--Urysohn, it is sufficient to show that if $U\subseteq E$ is an open set and $0\in \overline{U}\SM U$, then there exists a sequence in $U$ converging to $0$.

Let $p: E\rightarrow E/F$ be the quotient map. 
Note that $0\in \overline{V_n\cap U}$  for all $n\in \omega$. Hence $p(0)\in \overline{p(V_n\cap U)}$. Since $V_n\cap U$ is open and $p$ is an open map, the set $p(V_n\cap U)$ is open in $E/F$. Hence, by Theorem \ref{t:kFU-group-charac}(ii),  there are a sequence $(n_k)\subseteq \w$ and  a sequence $\{x_{k}\}_{k\in\w}\subseteq E$ such that
\[
p(x_{k})\in p(V_{n_k}\cap U) \; \mbox{ and } \; p(x_{k})\rightarrow p(0) \mbox{ as } k\to \infty.
\]
In particular, for every $k\in\w$, there are $z_k\in V_{n_k}\cap U$ and  $f_k\in F$ such that $x_k=z_k+f_k$. Replacing $x_k$ by $z_k$ if needed, we can assume that $x_k\in V_{n_k}\cap U$ for every $k\in\w$.

Fix a balanced neighborhood $W$ of zero in $E$. Choose $n_0\in \w$ such that $(V_{n_0}+V_{n_0})\cap F\subseteq W$. Taking into account that $p(W\cap V_{n_0})$ is an open neighborhood of zero in $E/F$, $W\cap V_{n_0}$ is balanced and $p(x_{k})\to 0$, we obtain that there is an $m>n_0$ such that
\[
\big[x_k+(W\cap V_{n_0})\big]\cap F\neq \emptyset \quad \mbox{ for all $k>m$.}
\]
Therefore, for every $k>m$, there exist $y_k\in W\cap V_{n_0}$ and  $u_k\in F$, such that $x_k+y_k=u_k$. Since $x_k\in V_{n_k}\subseteq V_k\subseteq V_{n_0}$ and $y_k\in V_{n_0}$, we deduce that $u_k=x_k+y_k\in (V_{n_0}+V_{n_0})\cap F\subseteq W$. Therefore $x_k=u_k-y_k\in W+W$. Since $W$ was arbitrary, this implies that $\{x_k\}_{k\in \w}$ is a sequence in $U$ that converges to $0$ in $E$. Thus $E$ is a $\kappa$-Fr\'{e}chet--Urysohn space.
\end{proof}

Michael showed (see \cite[Corollary~14.3]{kak}) that the product of a Fr\'{e}chet--Urysohn tvs and a metrizable tvs is Fr\'{e}chet--Urysohn. Proposition \ref{p:kFU-metriz-quot} shows that an analogous result holds true also for $\kappa$-Fr\'{e}chet--Urysohn tvs.

\begin{corollary} \label{c:prod-kFU-metr-tvs}
If $E$ is a $\kappa$-Fr\'{e}chet--Urysohn topological vector space and $F$ is a metrizable topological vector space, then the product $E\times F$ is a $\kappa$-Fr\'{e}chet--Urysohn space.
\end{corollary}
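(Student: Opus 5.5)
The plan is to deduce the statement directly from Proposition \ref{p:kFU-metriz-quot}, applied to the topological vector space $E\times F$. Consider the subspace $\widetilde F:=\{0\}\times F$ of $E\times F$. It is a linear subspace, and it is homeomorphic (indeed topologically isomorphic) to $F$, so it is metrizable. It is also closed in $E\times F$, because $E$ is Hausdorff: the point $\{0\}$ is closed in $E$, and therefore $\{0\}\times F$ is closed in the product.

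Next we identify the quotient $(E\times F)/\widetilde F$ with $E$. The projection $\pi: E\times F\to E$, $\pi(x,y)=x$, is a continuous, open, linear surjection, and its kernel is exactly $\widetilde F$. Consequently the induced map $(E\times F)/\widetilde F\to E$ is a linear bijection. It is continuous because $\pi$ is continuous and the quotient carries the quotient topology. It is open because $\pi$ is open, and the quotient map $E\times F\to(E\times F)/\widetilde F$ is surjective, so every open set in the quotient is the image of an open set in $E\times F$. Hence the induced map is a topological isomorphism, and $(E\times F)/\widetilde F$ is $\kappa$-Fr\'{e}chet--Urysohn because $E$ is.

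Applying Proposition \ref{p:kFU-metriz-quot} to the tvs $E\times F$ and the closed metrizable subspace $\widetilde F$ now gives that $E\times F$ is $\kappa$-Fr\'{e}chet--Urysohn. No step is a real obstacle. The only points needing care are that the induced map from the quotient is open, and that closedness of $\widetilde F$ relies on the Hausdorff (Tychonoff) convention built into the definition of $\kappa$-Fr\'{e}chet--Urysohn spaces.
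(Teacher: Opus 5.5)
Your proposal is correct and follows the paper's route: the paper deduces the corollary directly from Proposition \ref{p:kFU-metriz-quot}, applied to $E\times F$ with the closed metrizable subspace $\{0\}\times F$, whose quotient is topologically isomorphic to $E$. You also supply the details the paper leaves implicit, namely closedness of $\{0\}\times F$ and openness of the induced map.
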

Note that in Corollary \ref{c:prod-kFU-metr-tvs}, the condition that $E$ and $F$ are topological vector spaces is essential. Indeed, in Theorem 3.7 of \cite{GR-compact} it is shown that the product of the Fr\'{e}chet--Urysohn fun $V(\w)$ with an arbitrary non-discrete space $X$ is not $\kappa$-Fr\'{e}chet--Urysohn.

Being motivated by the famous Malykhin problem (which asks whether there exists a countable non-metrizable Fr\'{e}chet--Urysohn group), the following problem arises naturally (note that this problem complements Problem 2.1 of \cite{GR-compact}).
\begin{problem} \label{prob:kFU-not-kR}
Does there exist a countable  $\kappa$-Fr\'{e}chet--Urysohn group which is not metrizable?
\end{problem}
Under Martin's Axiom, in Example \ref{exa:kFU-count-non-kR} below we answer Problem \ref{prob:kFU-not-kR} in the affirmative in a stronger form.

For a (zero-dimensional) Tychonoff space  $X$, we denote by $C_p(X)$ (resp., $C_p(X,\mathbf{2})$) the space $C(X)$ (resp., $C(X,\mathbf{2})$) of all continuous functions from $X$ to $\IR$ (resp., from $X$ to the doubleton $\mathbf{2}=\ZZ(2)$ considered as a discrete two element group) endowed with the pointwise topology. The space $B_1(X,\mathbf{2})$ is the space of all functions $f:X\to \mathbf{2}$ which are pointwise limits of sequences from $C_p(X,\mathbf{2})$, it is also endowed with the pointwise topology.

The characteristic function of a subset $A$ of a set $\Omega$ is denoted by $\mathbf{1}_A$.

Recall that a separable metrizable space $X$ is called a {\em $Q$-set} if every subset of $X$  is of type $F_\sigma$ in $X$. We shall use the next assertion.

\begin{proposition} \label{p:Q-set=Q2-set}
A separable zero-dimensional metrizable space $X$ is  a $Q$-set if, and only if, $B_1(X,\mathbf{2})=\mathbf{2}^X$.
\end{proposition}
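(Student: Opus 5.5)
The plan is to translate both sides into statements about subsets of $X$ via characteristic functions. Every $f\in \mathbf{2}^X$ equals $\mathbf{1}_A$ with $A=f^{-1}(1)$, and a function $g\colon X\to\mathbf{2}$ is continuous exactly when $g^{-1}(1)$ is clopen. So $B_1(X,\mathbf{2})=\mathbf{2}^X$ says that for every $A\subseteq X$ there are clopen sets $C_n$ with $\mathbf{1}_{C_n}\to \mathbf{1}_A$ pointwise. Pointwise convergence in $\mathbf{2}$ means eventual equality, which gives
\[
A=\bigcup_{m\in\w}\bigcap_{n\ge m} C_n = \bigcap_{m\in\w}\bigcup_{n\ge m} C_n .
\]
The proof therefore reduces to the claim that $A$ is such a $\liminf=\limsup$ of clopen sets iff $A$ is both $F_\sigma$ and $G_\delta$ (a $\Delta^0_2$ set).

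($\Leftarrow$) Assume $B_1(X,\mathbf{2})=\mathbf{2}^X$ and let $A\subseteq X$. From the display, $A=\bigcup_m\bigcap_{n\ge m}C_n$. Each $\bigcap_{n\ge m}C_n$ is closed, so $A$ is $F_\sigma$. Hence $X$ is a $Q$-set.

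($\Rightarrow$) Assume $X$ is a $Q$-set and let $A\subseteq X$. Both $A$ and $X\SM A$ are $F_\sigma$, so $A$ is $\Delta^0_2$. We must produce clopen $C_n$ with $\mathbf{1}_{C_n}\to\mathbf{1}_A$ pointwise, and this is the main step. We use that $X$ is separable metrizable and zero-dimensional, so it has a countable clopen base and every open set is a countable union of clopen sets. Hence every closed set is a countable intersection of clopen sets.

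Write $A=\bigcup_k F_k$ and $X\SM A=\bigcup_k H_k$ with $F_k,H_k$ closed, and make both sequences increasing. For each $n$, the sets $F_n$ and $H_n$ are disjoint closed subsets of a zero-dimensional separable metrizable space, which has covering dimension $0$ and is strongly zero-dimensional. So there is a clopen $C_n$ with $F_n\subseteq C_n$ and $C_n\cap H_n=\emptyset$. If $x\in A$, then $x\in F_n$ for all large $n$, so $x\in C_n$ eventually. If $x\notin A$, then $x\in H_n$ for all large $n$, so $x\notin C_n$ eventually. Therefore $\mathbf{1}_{C_n}\to\mathbf{1}_A$ pointwise, so $\mathbf{1}_A\in B_1(X,\mathbf{2})$.

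The only delicate point is the separation of disjoint closed sets by clopen sets. I would justify it via the standard fact that for separable metrizable spaces, $\mathrm{ind}=0$ implies $\dim=0$. Alternatively, directly: use Lindelöfness to cover $F_n$ by countably many basic clopen sets missing $H_n$, cover $H_n$ by countably many basic clopen sets missing $F_n$, and run the usual disjointification to get a clopen separator.
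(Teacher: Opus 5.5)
Your argument is correct and is essentially the paper's proof. The converse direction uses $A=\bigcup_m\bigcap_{n\ge m}C_n$. The forward direction writes $A$ and $X\setminus A$ as increasing unions of closed sets and separates $F_n$ from $H_n$ by a clopen $C_n$; the paper justifies this step exactly as in your first option, citing that zero-dimensional Lindel\"of spaces are strongly zero-dimensional (Engelking, Theorem 6.2.7).

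One small caution about your alternative direct sketch: the countable clopen family you disjointify must cover all of $X$. For example, take all basic clopen sets that miss $F_n$ or miss $H_n$; these cover $X$ since $F_n\cap H_n=\emptyset$. If you disjointify covers of $F_n\cup H_n$ alone, you get an open set containing $F_n$ and missing $H_n$ that need not be closed.
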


\begin{proof}
Assume that $X$ is a $Q$-set. Let $f\in \mathbf{2}^X$ be arbitrary. Set  $M:=\{x\in X: f(x)=1\}$. Then $f=\mathbf{1}_M$. Since $X$ is a $Q$-set, we have $M=\bigcup_n A_n$ and $X\SM M=\bigcup_{n\in\w} B_n$ for some increasing sequences $\{A_n\}_{n\in\w}$ and  $\{B_n\}_{n\in\w}$ of closed subsets of $X$, respectively. As $X$ is zero-dimensional and Lindel\"{o}f, Theorem 6.2.7 of \cite{Eng} implies that $X$ is strongly zero-dimensional. Therefore, for every $n\in\w$, there is  a clopen subset $C_n$ of $X$ such that $A_n \subseteq C_n \subseteq X\SM B_n$. Then $\mathbf{1}_{C_n}\to \mathbf{1}_M=f$. Thus  $B_1(X,\mathbf{2})=\mathbf{2}^X$.
\smallskip

Conversely, assume that  $B_1(X,\mathbf{2})=\mathbf{2}^X$. To show that $X$ is a $Q$-set, let $M$ be an arbitrary subset of $X$. Then $\mathbf{1}_M\in B_1(X,\mathbf{2})$. Let $\{ \mathbf{1}_{A_n}\}_{n\in\w}$ be  a sequence in $C_p(X,\mathbf{2})$ which pointwise  converges to $\mathbf{1}_M$. It is clear that all $A_n$ are clopen subsets of $X$ and
\[
M=\bigcup_{k\in\w} \bigcap_{n\geq k} A_n.
\]
Since, for every $k\in\w$, the intersection $ \bigcap_{n\geq k} A_n$ is closed, we obtain that $M$ is an $F_\sigma$-set in $X$. Thus $X$ is a $Q$-set.
\end{proof}

A Tychonoff space $X$ is a {\em $k_\IR$-space} if every $k$-continuous function $f:X\to\IR$ is continuous. Recall that a function $f:X\to\IR$ is called {\em $k$-continuous} if the restriction of $f$ onto each compact subset of $X$ is continuous.
Recall also that a separable metrizable space $X$ is called a {\em $\gamma$-set} if the space $C_p(X)$ is Fr\'{e}chet--Urysohn.

\begin{example} \label{exa:kFU-count-non-kR}
Under $\mathrm{MA}$, there is a countable Boolean  $\kappa$-Fr\'{e}chet--Urysohn group $G$ which is not a $k_\IR$-space.
\end{example}

\begin{proof}
First we prove  the following claim.
\smallskip

{\em Claim 1. If there is a $\gamma$-set $X$ which is not a $Q$-set, then there is a countable Boolean  $\kappa$-Fr\'{e}chet--Urysohn group $G$ which is not a $k_\IR$-space.}
\smallskip

{\em Proof of Claim 1.} It is well-known that any $\gamma$-set is zero-dimensional, see \cite[Corollary]{GN}. It follows that the group $C_p(X,\mathbf{2})$ is separable and Hausdorff. Let $H$ be a countable and dense  subgroup of $C_p(X,\mathbf{2})$. Since $X$ is a $\gamma$-set, the space $C_p(X)$ and hence its subgroup $C_p(X,\mathbf{2})$ and the group $H$ are Fr\'{e}chet--Urysohn. As $X$  is not a $Q$-set, Proposition \ref{p:Q-set=Q2-set} implies that there is a function $g\in\mathbf{2}^X\SM B_1(X,\mathbf{2})$. Set
\[
G:=H\cup(g+H) \subseteq \mathbf{2}^X.
\]
It is clear that $G$ is a countable subgroup of $\mathbf{2}^X$. We show that $G$ is as desired.

Observe that the group $H$ is dense in $\mathbf{2}^X$ and, hence, $H$ and $g+H$ are dense in $G$. By construction, $H$ and hence also $g+H$ are Fr\'{e}chet--Urysohn spaces. Therefore, by \cite[Theorem~2.1]{Gabr-B1}, the group $G$ is a  $\kappa$-Fr\'{e}chet--Urysohn space.

It remains to show that $G$ is not a $k_\IR$-space. To this end, it suffices to show that the characteristic function $\mathbf{1}_H$ of $H$, which is discontinuous by the density of $H$ in $G$,  is $k$-continuous. To prove that $\mathbf{1}_H$ is $k$-continuous we show that for every compact subset $K$ of $G$, the sets $L:=K\cap H$ and $M:=K\cap (g+H)$ are compact. By the symmetry of $L$ and $M$, we show that $L$ is a closed subset of $G$.

Assume that $h\in \overline{L}$, and suppose for a contradiction that $h\not\in H$ and hence $h\in g+ H$.
Since $\overline{L}$ is a closed subset of the compact countable set $K$,  $\overline{L}$ is metrizable. Therefore, there is a sequence $\{g_n\}_{n\in\w}$ in $L$ converging to $h$. Take $h_0\in H$ such that $h=g+h_0$. Then $g_n-h_0\to g$, and hence $g\in B_1(X,\mathbf{2})$. But  this contradicts the choice of the function $g$ that finishes the proof of Claim 1.
\smallskip

To finish the proof of the example, we note that Theorem 1 of \cite{Gal-Miller} states that, under Martin's Axiom, there is a $\gamma$-set $X\subseteq \IR$ of cardinality the continuum. On the other hand, any $Q$-set has cardinality strictly less than $\mathfrak{c}$, see the discussion after Theorem 8.54 of \cite[p.~314]{Bukovsky2011}. Therefore the $\gamma$-set $X$ is not a $Q$-set. Now Claim 1 applies.
\end{proof}


\bibliographystyle{amsplain}

\end{document}